\documentclass[11pt,twoside]{article}
\usepackage{etex}
\usepackage[dvipsnames]{xcolor} % 颜色宏包调用

\usepackage{color}
\usepackage{booktabs}
\usepackage{amsthm}

\usepackage{amsfonts,amssymb,amsmath,amsxtra,url,float} % 论文初始格式设定 ams 宏包
\allowdisplaybreaks[4] % 允许多行公式强制换页
\usepackage[colorlinks,
  linkcolor=magenta, %
  anchorcolor=Periwinkle,
  citecolor=violet,
  urlcolor=blue
  ]{hyperref} % 超链颜色设定
\usepackage{enumitem}
\usepackage{geometry} 
\usepackage{rotating} % 允许rotating环境旋转段落
\usepackage{lscape} % 允许使用landscape环境进行局部旋转
\usepackage{yhmath} % 让括号变得更圆润, 但对某些版本的tex会使得大括号发生断裂
\usepackage{multirow}
\usepackage{graphicx} % 图片宏包调用, 调用此宏包后将被允许插入png 格式图片
\usepackage{subfigure} % figure环境下的补充宏包, 允许在figure环境下对自图片进行分栏
\usepackage{tikz}
\usepackage{pgfplots}
\usepackage{tikz-3dplot}
\usetikzlibrary{patterns}
\usetikzlibrary{3d,calc}
\usetikzlibrary{decorations.pathreplacing,decorations.markings}
 \tikzset{
  on each segment/.style={
    decorate,
    decoration={
      show path construction,
      moveto code={},
      lineto code={
        \path [#1]
        (\tikzinputsegmentfirst) -- (\tikzinputsegmentlast);
      },
      curveto code={
        \path [#1] (\tikzinputsegmentfirst)
        .. controls
        (\tikzinputsegmentsupporta) and (\tikzinputsegmentsupportb)
        ..
        (\tikzinputsegmentlast);
      },
      closepath code={
        \path [#1]
        (\tikzinputsegmentfirst) -- (\tikzinputsegmentlast);
      },
    },
  },
  mid arrow/.style={postaction={decorate,decoration={
        markings,
        mark=at position 0.6 with {\arrow[#1]{stealth}} % 可根据具体情况调整数字0.6
      }}},
}
\usetikzlibrary{arrows}
\usetikzlibrary{trees}

\usetikzlibrary{matrix}
\usetikzlibrary{patterns}
\usetikzlibrary{shadings} % 调用tikz作图指令包
\usepackage{fancyhdr} % 设置页眉、页脚 % see \title[]{...}
\def\headertitle{A Counterexample to Left-Right Symmetry for Diagonal Auslander Regularity}
\def\fstpage{1} % 起始页码设置
\def\page{$\begin{matrix} {\color{white}0} \\ \thepage \end{matrix}$} % 页脚页码代码转换

\usepackage[all]{xy} % 交换图箭图宏包调用, 允许在xymatrix环境下进行交换图表编译
\usepackage{dsfont} % 可以调用mathds 指令
\usepackage{cite}
\usepackage{mathrsfs} % mathscr
\numberwithin{figure}{section}
\usepackage{marginnote} % 可使用边注\marginnote{}指令
\usepackage{graphicx} % 可使用\figure 环境
\usepackage{multicol} %用于实现在同一页中实现不同的分栏

\usepackage{enumitem}
\setenumerate[1]{itemsep=0pt,partopsep=0pt,parsep=\parskip,topsep=3pt}
\setitemize[1]{itemsep=0pt,partopsep=0pt,parsep=\parskip,topsep=3pt}
\setdescription{itemsep=0pt,partopsep=0pt,parsep=\parskip,topsep=3pt}
\setlist[itemize]{leftmargin=35pt}
\setlist[enumerate]{leftmargin=35pt}
\usepackage{changepage} % changepage宏包可以支持段落的双侧缩进
\newtheorem{theorem}{Theorem}[section]
\newtheorem{lemma}[theorem]{Lemma}

\newtheorem{main theorem}[theorem]{Main Theorem}

\newtheorem{question}[theorem]{Question}

\usetikzlibrary{arrows}

\numberwithin{equation}{section}

\def\orcid{
\begin{tikzpicture}[baseline=-1mm]
\filldraw[Green!35] (0,0) circle (5pt);
\filldraw[white] (0,0) node{\tiny\textbf{iD}};
\end{tikzpicture}
}
\def\orcid{
\begin{tikzpicture}[baseline=-1mm]
\filldraw[Green!35] (0,0) circle (5pt);
\filldraw[white] (0,0) node{\tiny\textbf{iD}};
\end{tikzpicture}
}

\newcommand{\ORCIDNOTATION}[1]{\href{https://orcid.org/#1}{\orcid}}
\def\EnglishTitle{A Counterexample to Left-Right Symmetry for Diagonal Auslander-Regularity}
\def\PaperAuthorsENname{
Zhenyu Wang
$^{\ref{Author1}}$,
Zhaoyong Huang
$^{\ref{Author1}}$,
Yu-Zhe Liu
$^{\ref{Author2},\ref{Author3}}$ %\ORCIDNOTATION{\SecondAuthorORICD}\ref{orcid3}}$
}
\def\FirstEnOrgani{School of Mathematics, Nanjing University, Nanjing 210093, Jiangsu Province, P.R. China}
\def\SecondEnOrgani{School of Mathematics and Statistics, Guizhou University,\\
Guiyang 550025, Guizhou Province, P.R. China}
\def\ThirdEnOrgani{Guizhou Provincial Key Laboratory of Applied Mathematics and Computing Power \& Algorithms, \\
Guiyang 550025, Guizhou Province, P.R. China}

\def\kk{\Bbbk} % mathds双写字体的"k", 用于描述域
\def\Q{\mathcal{Q}} % mathcal花体的"Q", 用于描述箭图
\def\I{\mathcal{I}}

\def\compos{\ \lower-0.2ex\hbox{\tikz\draw (0pt, 0pt) circle (.1em);} \ }

\newcommand{\Hom}{\mathrm{Hom}} %
\newcommand{\Ext}{\mathrm{Ext}} %
\newcommand{\pd}{\mathrm{pd}}
\newcommand{\fd}{\mathrm{fd}}
\newcommand{\id}{\mathrm{id}}

\newcommand{\op}{\mathrm{op}}
\newcommand{\defines}[1]{{\it\color{violet}#1}}

\DeclareMathOperator{\gldim}{gl.dim}
\DeclareMathOperator{\grade}{grade}

\newcommand{\myBmod}[1]{%
  \begingroup
  \renewcommand{\arraystretch}{0.8}%
  \setlength{\arraycolsep}{1.5pt}%
  \expandafter\myBmodSplitA#1,\myBmodEnd
}
\def\myBmodSplitA#1,#2,#3,#4,#5,#6\myBmodEnd{%
  \def\myBmodOne{#1}\def\myBmodTwo{#2}\def\myBmodThree{#3}%
  \def\myBmodFour{#4}\def\myBmodFive{#5}%
  \expandafter\myBmodSplitB#6\myBmodEnd
}
\def\myBmodSplitB#1,#2,#3,#4,#5,\myBmodEnd{%
  \scriptstyle
  \begin{array}{@{}cccccc@{}}
    &&&& #5 & \\
    && \myBmodThree & #1 & #2 & \\
    \myBmodOne & \myBmodTwo & \myBmodFive & #3 && #4 \\
    & \myBmodFour &&&&
  \end{array}%
  \endgroup
}

\title{\bf \EnglishTitle
}

\vspace{5mm}

\author{\PaperAuthorsENname}
\date{ }

\begin{document}

\thispagestyle{empty}
\maketitle

%\begin{enumerate}[leftmargin=6.6cm]\footnotesize
%  \item[\orcid]
%    \ORCID{\SecondAuthorORICD}
%    \label{orcid3}
%\end{enumerate}

\vspace{2mm}
\begin{enumerate}[label=\textbf{\color{red}\arabic*}]\footnotesize
  \item
    \begin{center}
      \FirstEnOrgani
    \end{center}
    \label{Author1}
  \item
    \begin{center}
      \SecondEnOrgani
    \end{center}
    \label{Author2}
  \item
    \begin{center}
      \ThirdEnOrgani
    \end{center}
    \label{Author3}
\end{enumerate}

\vspace{1mm}
\begin{adjustwidth}{1cm}{1cm}
  \noindent\footnotesize
\textbf{Abstract}: We construct a finite-dimensional Auslander-regular algebra $R$
with global dimension three, which is right diagonal but not left diagonal; moreover,
any simple right $R$-module with grade $i$ has projective dimension $i$,
but there is a simple left $R$-module with grade one and projective dimension two.
It answers a question raised by Iyama negatively.

  \vspace{1mm}
  \noindent
  \textbf{2020 Mathematics Subject Classification}: 16E10; 16G10.

  \vspace{1mm}
  \noindent
  \textbf{Keywords}: diagonal Auslander-regular algebras; left-right symmetry; minimal injective coresolutions; projective dimension.
\end{adjustwidth}

%\newpage
%\tableofcontents

\section{Introduction}

For a ring $R$ and a right $R$-module $M$, we use $$0 \to M \to E^0(M) \to E^1(M)
\to \cdots \to E^i(M) \to \cdots$$ to denote a minimal injective
coresolution of $M$, and use $\pd_RM$, $\fd_RM$ and $\id_RM$ to denote the projective,
flat and injective dimensions of $M$, respectively. The number $\grade M:=\inf\{i\geqslant 0\mid\Ext^i_R(M,R)\neq 0\}$
is the \defines{grade} of $M$. We use $\gldim R$ to denote the global dimension of $R$.

In \cite{B} Bass proved a fundamental theorem which states that a commutative noetherian ring $R$
is a Gorenstein ring (that is, $\id_RR<\infty$)
if and only if $\fd_RE^i(R)\leqslant i$ for any $i\geqslant 0$.
Let $R$ be a left and right noetherian ring. Auslander proved that the above condition is left-right symmetric \cite[Theorem 3.7]{FGR};
in this case, $R$ is said to satisfy the {\it Auslander condition}. In addition, the ring $R$ is called {\it Auslander $n$-Gorenstein}
with $n\geqslant 1$ if $\fd_RE^i(R_R)\leqslant i$ for any $0\leqslant i\leqslant n-1$. Thus $R$ satisfies the Auslander condition
if and only if it is Auslander $n$-Gorenstein for all $n$.
A left and right noetherian ring $R$ is called {\it Auslander-regular}
if it satisfies the Auslander condition and $\gldim R<\infty$ \cite{C}. Since the Auslander condition is left-right symmetric,
so is the notion of Auslander-regular rings. In \cite{I} Iyama called an Auslander-regular ring
$R$ {\it right diagonal} if $\fd_RI=i$ for any non-zero direct summand $I$ of $E^i(R_R)$ and $i\geqslant 0$. Symmetrically,
the notion of
{\it left diagonal Auslander-regular rings} is defined.
He proved that the notion of diagonal Auslander-regular artin algebras is left-right symmetric in the case for the global dimension
of the algebra being at most two \cite[Theorem 3.4]{I}, and that for an Auslander 1-Gorenstein artin algebra $R$,
the condition that a simple $R$-module $S$ with $\grade S=0$ is projective is left-right symmetric \cite[3.4.1]{I}.
Then he asks whether these two results hold true in general.

\begin{question}[{\!\cite[Question 3.6.2]{I}}] \rm  \label{quest:Iyama}
Let $R$ be an artin algebra. Is the condition that $R$ is diagonal Auslander-regular left-right symmetric?
More generally, is the following condition $(*)$ left-right symmetric for an Auslander $n$-Gorenstein ring $R$?
\begin{itemize}
\item [$(*)$] A simple $R$-module $S$ with $\grade S=i$ satisfies $\pd_RS=i$ for any $0\leqslant i\leqslant n-1$.
\end{itemize}
\end{question}

In this paper, we construct a finite-dimensional Auslander-regular algebra $R$
with $\gldim R=3$, which is right diagonal but not left diagonal; moreover,
any simple right $R$-module $S$ with $\grade S=i$ satisfies $\pd_RS=i$,
but there is a simple left $R$-module $S'$ with $\grade S'=1$ and $\pd_{R^{\op}}S'=2$.
This answers Question \ref{quest:Iyama} negatively.

\section{A counterexample}

Let $\kk$ be an algebraically closed field and let $R:=\kk \Q/\I$ be a finite-dimensional $\kk$-algebra given by the quiver $\Q$:
\begin{figure}[htbp]
\centering
\begin{tikzpicture}[
  >=stealth,
  vertex/.style={inner sep=2pt},
  edge label/.style={fill=white, inner sep=1pt, font=\small},
  circle arrow/.style={->, shorten <=6pt, shorten >=6pt}
]
  % Vertices on the circle, in clockwise order.
  \node[vertex] (1) at (180:2.2cm) {$1$};
  \node[vertex] (2) at (130:2.2cm) {$2$};
  \node[vertex] (5) at (80:2.2cm) {$5$};
  \node[vertex] (6) at (25:2.2cm) {$6$};
  \node[vertex] (8) at (-30:2.2cm) {$8$};
  \node[vertex] (7) at (-80:2.2cm) {$7$};
  \node[vertex] (4) at (-130:2.2cm) {$4$};

  % Three outward branches.
  \node[vertex] (0) at (180:3.55cm) {$0$};
  \node[vertex] (3) at (-30:1.25cm) {$3$};
  \node[vertex] (9) at (  5:1.20cm) {$9$};

  \draw[->] (0) -- node[edge label, above] {$a_1$} (1);
  \draw[circle arrow] (1.center) arc[start angle=180,end angle=130,radius=2.2cm];
  \draw[circle arrow] (1.center) arc[start angle=180,end angle=230,radius=2.2cm];
  \draw[->] (3) -- node[edge label, below right] {$a_4$} (4);
  \draw[circle arrow] (2.center) arc[start angle=130,end angle=80,radius=2.2cm];
  \draw[->] (4) -- node[right] {$a_6$} (5);
  \draw[->] (5) -- node[right] {$a_7$} (9);
  \draw[circle arrow] (5.center) arc[start angle=80,end angle=25,radius=2.2cm];
  \draw[circle arrow] (4.center) arc[start angle=-130,end angle=-80,radius=2.2cm];
  \draw[circle arrow] (6.center) arc[start angle=25,end angle=-30,radius=2.2cm];
  \draw[circle arrow] (7.center) arc[start angle=-80,end angle=-30,radius=2.2cm];

  % Place the circular-arrow labels outside the circle.
  \node[edge label] at (155:2.55cm) {$a_2$};
  \node[edge label] at (205:2.55cm) {$a_3$};
  \node[edge label] at (105:2.55cm) {$a_5$};
  \node[edge label] at (52.5:2.55cm) {$a_8$};
  \node[edge label] at (-105:2.55cm) {$a_9$};
  \node[edge label] at (-2.5:2.62cm) {$a_{10}$};
  \node[edge label] at (-55:2.62cm) {$a_{11}$};
\end{tikzpicture}
\caption{The quiver $\Q$ of $R$.}% The relations are given in \eqref{eq:relations}.}
\label{fig:quiver}
\end{figure}

\noindent
with the ideal $\I$ given by the relations
$$a_2a_5=a_3a_6, a_6a_8a_{10}=a_9a_{11}\ \text{and}\ a_5a_7=a_4a_9=a_4a_6a_8=0.$$
For any $i\in\Q_0 = \{0,1,\dots,9\}$, associated with the vertex $i$, we write
$$P(i):=\text{the indecomposable projective right}\ R\text{-module},$$
$$I(i):=\text{the indecomposable injective right}\ R\text{-module},$$
$$S(i):=\text{the indecomposable simple right}\ R\text{-module},$$
$$P'(i):=\text{the indecomposable projective left}\ R\text{-module},$$
$$I'(i):=\text{the indecomposable injective left}\ R\text{-module},$$
$$S'(i):=\text{the indecomposable simple left}\ R\text{-module}.$$

\begin{lemma} \label{lemm:pdI idP right}
It holds that\\
{\rm (1)}
\begin{align}
 & (\pd_RI(i))_{i=0}^9 =(1, 2, 2, 3, 2, 1, 1, 1, 0, 0), \label{lemm:pdI idP right 1} \\
 & (\id_RP(i))_{i=0}^9 =(0, 1, 1, 0, 1, 2, 2, 1, 3, 2), \label{lemm:pdI idP right 2}\\
 & (\pd_RS(i))_{i=0}^9=(1, 2, 2, 3, 2, 1, 1, 1, 0, 0), \label{lemm:simpmod-pd and id 1} \\
 & (\id_RS(i))_{i=0}^9=(0, 1, 1, 0, 1, 2, 2, 2, 3, 2). \label{lemm:simpmod-pd and id 2}
\end{align}
{\rm (2)}
\begin{align}
 & (\pd_{R^{\op}}I'(i))_{i=0}^9=(0, 1, 1, 0, 1, 2, 2, 1, 3, 2), \label{lemm:pdI idP left 1} \\
 & (\id_{R^{\op}}P'(i))_{i=0}^9=(1, 2, 2, 3, 2, 1, 1, 1, 0, 0), \label{lemm:pdI idP left 2}\\
 & (\pd_{R^{\op}}S'(i))_{i=0}^9=(0, 1, 1, 0, 1, 2, 2, 2, 3, 2), \label{lemm:simpmod-pd and id 3} \\
 & (\id_{R^{\op}}S'(i))_{i=0}^9=(1, 2, 2, 3, 2, 1, 1, 1, 0, 0). \label{lemm:simpmod-pd and id 4}
\end{align}
{\rm (3)} $\gldim R=3$.\\
{\rm (4)} The minimal injective coresolutions of $R_R$ and $_RR$ are as follows.
\begin{align*}
&0\rightarrow R_R \rightarrow I(8)^{\oplus 8}\oplus I(9)^{\oplus 4}\rightarrow I(0)\oplus I(5)^{\oplus 4}\oplus I(6)^{\oplus 2}\oplus I(7)^{\oplus 4}\\
&\rightarrow I(1)\oplus I(2)\oplus I(4)^{\oplus 2}\rightarrow I(3)\rightarrow 0, \\
& and\\
& 0\rightarrow {_RR}\rightarrow I'(0)^{\oplus 8}\oplus I'(3)^{\oplus 4}\rightarrow I'(1)\oplus I'(2)^{\oplus 3}\oplus I'(4)^{\oplus 5}\oplus I'(7)\\
&\rightarrow I'(5)\oplus I'(6)^{\oplus 2}\oplus I'(7)\oplus I'(9)\rightarrow I'(8)\rightarrow 0.
\end{align*}
\end{lemma}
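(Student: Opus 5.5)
The plan is a direct computation with the representation theory of the bound quiver algebra $R=\kk\Q/\I$; nothing beyond standard facts is needed. Since $\Q$ is acyclic and $\I$ is admissible, I will first write down every indecomposable projective right module $P(i)$, as a representation (equivalently, the paths starting at $i$ modulo $\I$), and every indecomposable injective $I(i)$ (the paths ending at $i$, dualized). For example, $P(0)$ has basis $e_0,a_1,a_1a_2,a_1a_3,a_1a_2a_5=a_1a_3a_6,\dots$. The relations $a_2a_5=a_3a_6$ and $a_6a_8a_{10}=a_9a_{11}$ glue the two branches. The zero relations $a_5a_7=a_4a_9=a_4a_6a_8=0$ cut paths off. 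From these pictures I will read off radicals and socles, and hence projective covers and injective envelopes. Part (2) will not be computed separately: the standard duality $D=\Hom_\kk(-,\kk)$ exchanges right and left modules, with $D(P(i))=I'(i)$, $D(I(i))=P'(i)$ and $D(S(i))=S'(i)$. Hence $\pd_{R^{\op}}I'(i)=\id_RP(i)$, $\id_{R^{\op}}P'(i)=\pd_RI(i)$, $\pd_{R^{\op}}S'(i)=\id_RS(i)$ and $\id_{R^{\op}}S'(i)=\pd_RS(i)$, so (2) follows verbatim from (1).

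For (1), I will compute minimal projective resolutions of the ten simple modules $S(i)$ using $0\to \operatorname{rad}P(i)\to P(i)\to S(i)\to 0$ and iterating. The radicals decompose according to the arrows leaving $i$, and the commutativity relations make some syzygies indecomposable and uniserial-like. In the same manner I will compute minimal projective resolutions of the $I(i)$, and minimal injective coresolutions of the $S(i)$ and of the $P(i)$, via cosyzygies $I(i)/S(i)$. Checking along the way that the resolutions terminate gives the listed dimension vectors. Part (3) is then immediate: $\gldim R=\max_i \pd_RS(i)=3$, attained at $i=3$.

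For (4), I will assemble the minimal injective coresolution of $R_R=\bigoplus_i P(i)$ from the individual coresolutions of the $P(i)$ computed in (1). The $m$-th term is $\bigoplus_j I(j)^{\dim\Ext^m_R(S(j),R)}$, so I can either track socles of successive cosyzygies or compute the multiplicities $\dim \Ext^m(S(j),R)$ from the projective resolutions of the simples by applying $\Hom_R(-,R)$. Doing both gives a consistency check. For instance, $\operatorname{soc}P(i)$ should consist only of copies of $S(8)$ and $S(9)$, giving $E^0=I(8)^{8}\oplus I(9)^{4}$, which I will verify by counting the maximal nonzero paths. The left coresolution is obtained by the same method on $R^{\op}$, where by duality the computation is the minimal projective resolution of $D(R)$ turned around.

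The main obstacle is bookkeeping in the middle of the computation: the syzygies and cosyzygies of $P(5),P(6),P(8)$ and of $I(1),I(2),I(3)$, where both commutativity relations interact with the zero relations $a_4a_6a_8=0$ and $a_4a_9=0$. Here I will write the modules explicitly as representations with matrices to make sure the maps are minimal and the kernels are identified correctly. I will also cross-check dimension counts using the alternating sums of dimension vectors along each resolution.
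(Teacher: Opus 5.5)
Your plan matches the paper's proof: a direct computation of minimal projective resolutions and injective coresolutions for (1), the duality $D=\Hom_\kk(-,\kk)$ to get (2) from (1), $\gldim R=\max_i\pd_RS(i)=\pd_RS(3)=3$ for (3), and a direct calculation via socles and $\dim\Ext^m_R(S(j),R)$ for (4). One caution for the bookkeeping: $\operatorname{rad}P(1)=a_2R+a_3R$ and $\operatorname{rad}P(4)=a_6R+a_9R$ are not direct sums, since they intersect in $a_2a_5R$ and in $\kk\, a_9a_{11}$ respectively, so the radicals do not literally split along the arrows; likewise, when you count socles you must identify maximal paths that are equal modulo $\I$, such as $a_6a_8a_{10}=a_9a_{11}$.
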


\begin{proof}
(1) It is straightforward to verify these equations by constructing the minimal projective resolution
or injective coresolution of each corresponding module. For example,
for the indecomposable injective right $R$-module $I(0)$, its minimal projective resolution is
$$0\rightarrow P(1)\rightarrow P(0)\rightarrow I(0)\rightarrow0,$$
and so $\pd_RI(0)=1$.

(2) According to (1), we get (2) immediately by using the usual duality $\Hom_{\kk}(-,\kk)$.

Assertion (3) follows directly from (1) or (2). Assertion (4) follows by a direct calculation.
\end{proof}

\begin{lemma}\label{prop:grade}
It holds that
\begin{align}
 & (\grade S(i))_{i=0}^9=(1,2,2,3,2,1,1,1,0,0), \label{prop:grade 1} \\
 & (\grade S'(i))_{i=0}^9=(0,1,1,0,1,2,2,1,3,2). \label{prop:grade 2}
\end{align}
\end{lemma}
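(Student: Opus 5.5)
The plan is to read off the grades from the minimal injective coresolutions in Lemma~\ref{lemm:pdI idP right}(4), using the standard fact that for a simple module $S(i)$ one has $\Ext^k_R(S(i),R)\cong\Hom_R(S(i),E^k(R_R))$. This holds because the minimal injective coresolution is minimal: the differentials of $\Hom_R(S(i),E^\bullet(R_R))$ vanish, since every map from a simple into $E^k$ lands in the socle, which lies in the kernel of the next differential. Consequently $\grade S(i)=\min\{k\mid I(i)\text{ is a direct summand of }E^k(R_R)\}$. Symmetrically, $\grade S'(i)=\min\{k\mid I'(i)\text{ is a direct summand of }E^k({}_RR)\}$.

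For the right modules, Lemma~\ref{lemm:pdI idP right}(4) gives $E^0=I(8)^{8}\oplus I(9)^{4}$, $E^1=I(0)\oplus I(5)^4\oplus I(6)^2\oplus I(7)^4$, $E^2=I(1)\oplus I(2)\oplus I(4)^2$ and $E^3=I(3)$. Every vertex $0,\dots,9$ occurs in exactly one term. Hence the grades are $\grade S(8)=\grade S(9)=0$; $\grade S(0)=\grade S(5)=\grade S(6)=\grade S(7)=1$; $\grade S(1)=\grade S(2)=\grade S(4)=2$; and $\grade S(3)=3$. This is exactly \eqref{prop:grade 1}.

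For the left modules, the second coresolution gives $E^0=I'(0)^8\oplus I'(3)^4$, $E^1=I'(1)\oplus I'(2)^3\oplus I'(4)^5\oplus I'(7)$, $E^2=I'(5)\oplus I'(6)^2\oplus I'(7)\oplus I'(9)$ and $E^3=I'(8)$. Here $I'(7)$ occurs in both $E^1$ and $E^2$, so the minimum gives $\grade S'(7)=1$. The remaining values are read off directly, which yields \eqref{prop:grade 2}.

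The main obstacle is not this bookkeeping. It is the correctness of the coresolutions in Lemma~\ref{lemm:pdI idP right}(4) themselves, which are asserted there only "by direct calculation". Since we are allowed to use earlier results, we take them as given.

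As a sanity check, I would independently verify a couple of entries. Take $S(0)$: its projective resolution $0\to P(1)\to P(0)\to S(0)\to 0$ (here $S(0)=I(0)$, as $0$ is a source). Applying $\Hom_R(-,R)$, $\Hom(S(0),R)=0$ because $S(0)$ is not in the socle of $R_R$, and $\Ext^1\neq 0$ because the map $P(1)\to P(0)$ is a non-split mono. This confirms $\grade S(0)=1$.
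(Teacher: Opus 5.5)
Your proposal is correct and takes essentially the same approach as the paper: both use $\Ext^t_R(S(i),R)\cong\Hom_R(S(i),E^t(R_R))$ and then read the grades off the minimal injective coresolutions in Lemma~\ref{lemm:pdI idP right}(4), taking the first $E^k$ in which $I(i)$ (resp.\ $I'(i)$) occurs, as with $I'(7)\mid E^1({}_RR)$. The only differences are that the paper cites Iwanaga's Lemma~1 for the isomorphism where you prove it directly from minimality, and that you add an independent check of $\grade S(0)=1$.
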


\begin{proof}
By \cite[Lemma 1]{Iw}, we have
$$\Ext_R^t(S(i),R) \cong \Hom_R(S(i), E^t(R_R))$$
for any $0\leqslant i\leqslant 9$ and $t\geqslant 0$. Thus, by Lemma \ref{lemm:pdI idP right}(4), for $S(0)$, we have
\begin{align*}
  \Hom_R(S(0),R) \cong
  \Hom_R(S(0), E^0(R_R)) \cong \Hom_R(S(0), I(8)^{\oplus 8} \oplus I(9)^{\oplus 4}),
\end{align*}
and
\begin{align*}
  \Ext_R^1(S(0),R)  \cong \Hom_R(S(0), E^1(R_R))
 \cong \Hom_R(S(0), I(0)\oplus I(5)^{\oplus 4} \oplus I(6)^{\oplus 2} \oplus I(7)^{\oplus 4}).
\end{align*}
Since $\Hom_R(S(0), I(8))=0$ and $\Hom_R(S(0), I(9))=0$, we have $\Hom_R(S(0),R)=0$.
Since $\Hom_R(S(0), I(0)) \ne 0$, we have $\Ext_R^1(S(0),R)\ne 0$.
Thus, $\grade S(0)=1$. Similarly, we may compute the grades of other simple right $R$-modules and simple left $R$-modules,
and obtain \eqref{prop:grade 1} and \eqref{prop:grade 2}.
\end{proof}

By Lemma \ref{lemm:pdI idP right}, we have that $R$ is Auslander-regular with $\gldim R=3$.
Moreover, for any $0\leqslant i\leqslant 3$ and any non-zero direct summand $I$ of $E^i(R_R)$, one can check that $\pd_RI=i$.
It follows that $R$ is right diagonal Auslander-regular.
On the other hand, by Lemma \ref{lemm:pdI idP right}(4)(2), the indecomposable injective left $R$-module $I'(7)$
occurs as a direct summand of $E^2(_RR)$, but $\pd_{R^{\op}} I'(7)=1$. It means that $R$ is not left diagonal Auslander-regular.
Consequently, diagonal Auslander-regularity is not left-right symmetric.

By Lemmas \ref{lemm:pdI idP right} and \ref{prop:grade},
we have $\eqref{lemm:simpmod-pd and id 1} = \eqref{prop:grade 1}$, that is,
$$\pd_RS(i) = \grade S(i)\ \text{for any}\ 0\leqslant i\leqslant 9.$$
By Lemmas \ref{lemm:pdI idP right} and \ref{prop:grade} again, we have
$$\eqref{lemm:simpmod-pd and id 3} = (0,1,1,0,1,2,2,2,3,2) \ne (0,1,1,0,1,2,2,1,3,2) = \eqref{prop:grade 2},$$
To be precise, we have
$$\pd_{R^{\op}}S'(7)=2\neq 1=\grade S'(7).$$
Therefore, the algebra $R$ shows that condition $(*)$ is not left-right symmetric when $n\geqslant 2$.

\vspace{0.5cm}

\section*{Acknowledgements}
This work was supported by the NSF of China (Grant No. 12371038, 12401042 and 12561008).


\begin{thebibliography}{99}
\bibitem{B} H. Bass, {\it On the ubiquity of Gorenstein rings}, Math. Z. {\bf 82} (1963), 8--28.
\bibitem{C} J. Clark, {\it Auslander--Gorenstein rings for beginners}, International Symposium on Ring
Theory (Kyongju, 1999),  Trends Math., Birkhauser Boston, Boston, MA, 2001, pp.95--115.
\bibitem{I} O. Iyama, {\it The relationship between homological properties and representation theoretic realization of artin algebras},
Trans. Amer. Math. Soc. {\bf 357}(2) (2005), 709--734.
\bibitem{FGR} R.M. Fossum, P.A. Griffith and I. Reiten, Trivial Extensions of Abelian Categories.
Homological Algebra of Trivial Extensions of Abelian Categories with Applications to Ring Theory,
Lecture Notes in Math. {\bf 456}, Springer-Verlag, Berlin-Heidelberg-New York, 1975.

%    \bibitem{HZ} C. H. Huang, L. J. Zheng, {\it Simple modules over Auslander regular rings}, Open Mathematics, {\bf 15} (2017), 1618--1622.
%
%    \bibitem{Lam} T. Y. Lam,  {\it Lectures on Modules and Rings}, Graduate Texts in Mathematics, Volume 189, Springer, 1998.
\bibitem{Iw} Y. Iwanaga, {\it On rings with finite self-injective dimension}, Communications in Algebra, {\bf 7}(4) (1979), 393--414.

\end{thebibliography}
\end{document}